\newtheorem{theorem}{Theorem}[section]
\newtheorem{corollary}[theorem]{Corollary}
\tikzstyle{vertex}=[circle, draw, inner sep=0pt, minimum size=1pt]
\newcommand{\vertex}{\node[vertex]}
\theoremstyle{definition}
\newtheorem{definition}[theorem]{Definition}
\theoremstyle{remark}
\numberwithin{equation}{section}
\begin{document}

\title{Very cost effective bipartition in $\Gamma(Z_n)$}

\author{Ravindra Kumar}
\address{Department of Mathematics,IIT Patna, Bihta campus, Bihta-801 106}
\curraddr{}
\email{ravindra.pma15@iitp.ac.in}
\thanks{}

\author{Om Prakash}
\address{Department of Mathematics \\
IIT Patna, Bihta campus, Bihta-801 106}
\curraddr{}
\email{om@iitp.ac.in}
\thanks{}

\subjclass[2010]{13M99, 05C25, 05C76.}

\keywords{Very cost effective bipartition, Zero divisor graph, nilradical graph, non-nilradical graph, line graph, Total graph}

\date{}

\maketitle
\begin{abstract}

 Let $\mathbb{Z}_n$ be the finite commutative ring of residue classes $modulo~ n$ and $\Gamma(\mathbb{Z}_n)$ be its zero-divisor graph.  The nilradical graph and non-nilradical graph of $\mathbb{Z}_n$ are denoted by $N(\mathbb{Z}_n)$ and $\Omega(\mathbb{Z}_n)$ respectively. In 2012, Haynes et al. \cite{twh} introduced the concept of very cost effective graph. For a graph $G = (V,E)$ and a set of vertices $S \subseteq V$, a vertex $v \in S$ is said to be very cost effective if it is adjacent to more vertices in $V \setminus S$ than in $S$. A bipartition  $\pi = \{S, V \setminus S\}$ is called very cost effective if both $S$ and $V \setminus S$ are very cost effective sets \cite {twh, twh1}. In this paper, we investigate the very cost effective bipartition of $\Gamma(Z_n)$, where $n = p_1 p_2 \cdots p_m$,  here all $p_{i}^{'}$s are distinct primes.  In addition, we discuss the cases in which  $N(\mathbb{Z}_n)$ and  $\Omega(\mathbb{Z}_n)$ graphs have very cost effective bipartition for different $n$. Finally, we derive some results for very cost effective bipartition of the Line graph and Total graph of $\Gamma(\mathbb{Z}_n)$, denoted by $ L(\Gamma(\mathbb{Z}_n))$ and $ T(\Gamma(\mathbb{Z}_n))$ respectively.
\end{abstract}

\section{INTRODUCTION}

Let $R= \mathbb{Z}_n$ be the finite commutative ring of residue classes $modulo ~n$ with identity$(1 \neq 0)$ and $\Gamma(\mathbb{Z}_n)$ be its zero-divisor graph. The study of zero-divisor graphs of commutative rings reveals interesting relation between ring theory and graph theory. Because, algebraic tools help to understand graphs properties and vice versa. An element $z(\neq 0) \in R$ is said to be a zero-divisor if there exists a non-zero $r \in R$ such that $rz = 0$. The set of zero-divisors is denoted by $Z(R)$ and zero-divisor graph of $R$, $\Gamma(R)$, is the graph whose vertices are the zero-divisors of $R$ and its two vertices are connected by an edge if and only if their product is $0$. In 1988, the concept of zero-divisor graph of a commutative ring was introduced by I. Beck \cite{beck} in context of coloring of rings and were redefined in 1999 by D. F Anderson and P. Livingston in\cite{ander}.\\
For a graph $G = (V, E)$, the open neighborhood of a vertex $u \in V$ is the set $N(u) = \lbrace v ~ \vert ~ uv \in E \rbrace$, and the closed neighborhood  of $u$ is the set $N[u] = N(u) \cup \lbrace u \rbrace$. In the same fashion, the open neighborhood of a set $S \subseteq V$ is the set $N(S) = \cup_{u \in S} N(u)$, and the closed neighborhood is the set $N[S] = N(S) \cup S$ respectively. The order of the open neighborhood of a vertex $u \in V$ is denoted by $\vert N(u) \vert$. The degree of a vertex $v$ in a graph $G$, denoted by $deg(v)$, is $\vert N(v) \vert$. For basic definitions and results on graph we refer \cite{harary}.\\

   In 2012, the concept of cost effective and very cost effective sets in graphs were introduced by Haynes et al.\cite{twh} and further studied in \cite{twh1} for various graphs. A vertex $v$ in a set $S$ is said to be cost effective if it is adjacent to at least as many vertices in $V \setminus S$ as in $S$, that is, $\vert N(v) \cap S\vert \leq \vert N(v) \cap V \setminus S\vert$. A Vertex $v$ in a set $S$ is very cost effective if it is adjacent to more vertices in $V \setminus S$ than in $S$, that is, $\vert N(v) \cap S\vert < \vert N(v) \cap V \setminus S\vert$. A set $S$ is (very) cost effective if every vertex $v \in S$ is (very) cost effective. Moreover, very cost effective bipartition were also introduced in \cite{twh}. A bipartition $\pi = {\lbrace S,V \setminus S \rbrace}$ is called cost effective if each of $S$ and $V \setminus S$ is cost effective, and $\pi$ is very cost effective if each of $S$ and $V \setminus S$ is very cost effective. Graphs that have a (very) cost effective bipartition are called (very) cost effective graphs. It was shown in \cite{twh} that every connected, non-trivial graph is cost effective. Also, they observed in \cite{twh1} that all bipartite graphs with no isolated vertices are very cost effective.\\

   A line graph $L(G)$ of a simple graph $G$ is obtained by associating a vertex with each edge of the graph $G$ and connecting two vertices by an edge if and only if the corresponding edges of $G$ have a vertex in common. The total graph $T(G)$ of the graph $G$ has a vertex for each edge and each vertex of $G$ and an edge in  $T(G)$ for every edge-edge, vertex-edge, and vertex-vertex adjacency in $G$. Here, we illustrate an example of a Zero divisor graph and its line graph over ring $\mathbb{Z}_{16}$:\\

\[\begin{tikzpicture}
	\vertex (6) at (0,1) [label=left:$6$]{};
	\vertex (2) at (0.5,2.5) [label=left:$2$]{};
	\vertex (10) at (0.5,-0.5) [label=below:$10$]{};
	\vertex (8) at (2,1) [label=above:$8$]{};
	\vertex (4) at (3.5,2.5) [label=above:$4$]{};
	\vertex (14) at (3.5,-0.5) [label=right:$14$]{};
	\vertex (12) at (4,1) [label=right:$12$]{};
	\path
		(2) edge (8)
		(6) edge (8)
		(8) edge (10)
		(8) edge (12)
		(8) edge (14)
		(8) edge (4)
		(12) edge (4)	
	;
\end{tikzpicture}\]
\hspace{4.5cm} \textbf{Figure 1} $\varGamma(\mathbb{Z}_{16})$

\[\begin{tikzpicture}

	\vertex (c1) at (4.5,-1) [label=below:${(6,8)}$]{};
	\vertex (c0) at (6,1.5) [label=right:${(2,8)}$]{};
	\vertex (c2) at (5,4) [label=right:${(10,8)}$]{};
	\vertex (c6) at (3,5) [label=above:${(4,12)}$]{};
	\vertex (c5) at (0.5,3.5) [label=above:${(4,8)}$]{};
	\vertex (c3) at (-0.5,1) [label=left:${(14,8)}$]{};
	\vertex (c4) at (1,-1) [label=below:${(12,8)}$]{};
	\path
		(c0) edge (c1)
		(c0) edge (c2)
		(c0) edge (c3)
		(c0) edge (c4)
		(c0) edge (c5)
		(c1) edge (c2)
		(c1) edge (c3)
		(c1) edge (c4)
		(c1) edge (c5)
		(c4) edge (c2)
		(c2) edge (c3)
		(c5) edge (c2)
		(c4) edge (c3)
		(c5) edge (c4)
		(c5) edge (c3)
		(c5) edge (c6)
		(c4) edge (c6)
	;

\end{tikzpicture}\]
\hspace{4.5cm} \textbf{Figure 2} $L(\varGamma(\mathbb{Z}_{16}))$\\

\section{Bipartition in $\Gamma(\mathbb{Z}_n)$ and $L(\Gamma(\mathbb{Z}_n))$}

Let $\pi = {\lbrace R,B \rbrace}$ be a bipartition of the graph $G$. If $\pi$ is a very cost effective bipartition of $G$, then we say that $G$ is very cost effective under $\pi$.

\begin{theorem}
If $n = p_1.p_2...p_m, m \geq 1$ and $p_1 < p_2 <...< p_m$ are primes, then $\Gamma(\mathbb{Z}_n)$ is a very cost effective graph.\\

\end{theorem}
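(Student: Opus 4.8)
The plan is to read off the adjacency of $\Gamma(\mathbb{Z}_n)$ purely from prime supports. For a nonzero zero-divisor $z$ write $A_z=\{\,p_i : p_i\mid z\,\}$, a nonempty proper subset of $P=\{p_1,\dots,p_m\}$, and write $A^{c}=P\setminus A$. Since $n$ is squarefree, $zw\equiv 0\pmod n$ holds iff every $p_i$ divides $zw$, i.e.\ iff $A_z\cup A_w=P$. Thus the vertices split into classes $V_A=\{z:A_z=A\}$, one for each nonempty proper $A\subseteq P$; each $V_A$ is independent, two classes $V_A,V_B$ are joined by a complete bipartite graph exactly when $A\cup B=P$, and $|V_A|=\prod_{p\notin A}(p-1)$. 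I would record the \emph{twin} remark that all vertices of a class have the same neighbourhood, so once a bipartition $\{S,V\setminus S\}$ is fixed every vertex of $V_A$ has the same two counts $|N(v)\cap S|$ and $|N(v)\cap(V\setminus S)|$. Consequently any very cost effective bipartition must keep each class monochromatic, which reduces the whole problem to choosing a side for each subset $A$. The case $m=1$ is degenerate ($\mathbb{Z}_{p_1}$ is a field, so $\Gamma$ has no vertices and is vacuously very cost effective) and is dismissed at once.

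For $m\ge 2$ I would fix one prime, say $p=p_1$, and propose the bipartition
\[
S=\{z: p\mid z\},\qquad V\setminus S=\{z: p\nmid z\},
\]
that is, $\mathcal S=\{A: p\in A\}$ against $\mathcal T=\{A:p\notin A\}$ at the level of classes. The easy half is the $\mathcal T$-side: if $p\notin A$ then every neighbour $B$ of $A$ satisfies $B\supseteq A^{c}\ni p$, so \emph{all} neighbours of $V_A$ lie in $S$; as $A^{c}$ is itself such a neighbour, $V_A$ is non-isolated and the very cost effective inequality holds trivially, all neighbour weight sitting on the opposite side.

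The crux, and the step I expect to be the main obstacle, is the $\mathcal S$-side: for $A$ with $p\in A$ I must show that the opposite-side ($\mathcal T$) neighbour weight strictly exceeds the same-side ($\mathcal S$) neighbour weight. Here I would use the complementation map $\phi:B\mapsto B\cup\{p\}$, which sends the $\mathcal T$-neighbours $\{B:A^{c}\subseteq B\subseteq P\setminus\{p\}\}$ of $A$ bijectively onto the $\mathcal S$-neighbours of $A$ together with the single excluded full set $P$; in particular there is exactly one unmatched $\mathcal T$-neighbour, namely $P\setminus\{p\}$. Since $\phi$ deletes $p$ from the complement, $|V_{\phi(B)}|=|V_B|/(p-1)$, and because $p-1\ge 1$ each matched $\mathcal S$-neighbour weighs no more than its $\mathcal T$-partner. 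Summing, the total $\mathcal S$-neighbour weight equals $\tfrac1{p-1}\sum_{B\neq P\setminus\{p\}}|V_B|\le \sum_{B}|V_B|-|V_{P\setminus\{p\}}|<\sum_{B}|V_B|$, the strict loss $|V_{P\setminus\{p\}}|=p-1>0$ coming precisely from the unmatched neighbour. This yields the required strict inequality for every $A\in\mathcal S$ and finishes the proof.

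The delicate points to get right are the bookkeeping of which neighbour is lost under $\phi$ and the verification that the factor $1/(p-1)\le1$ is what forces the comparison in the correct direction; once the support reduction and the twin remark are in place, everything else is routine counting, and in fact the argument goes through for \emph{any} fixed prime divisor of $n$, not only the smallest.
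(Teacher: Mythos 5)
Your proposal is correct, and its key construction is the same as the paper's: bipartition the zero-divisors according to whether a single fixed prime divides them (you use $p_1$, the paper uses $p_m$; as you observe, any prime divisor of $n$ works). Both arguments dispatch the prime-free side immediately, since it is independent and every vertex there has all of its neighbours on the opposite side. Where you genuinely diverge is in verifying the strict inequality on the other side. The paper counts $\vert N(u)\cap R\vert$ and $\vert N(u)\cap B\vert$ directly via the rather opaque formula $M=\lfloor (p_1p_2\cdots p_{m-1}-1)/\Pi p_i\rfloor$ and then compares $M$ with $(\Pi p_i)p_m-1-M$. You instead pass to the decomposition into support classes $V_A$ (nonempty proper $A\subseteq P$, $\vert V_A\vert=\prod_{p\notin A}(p-1)$, with $V_A$ and $V_B$ completely joined iff $A\cup B=P$), note that each class consists of twins so the problem becomes a choice of side per class, and compare the two neighbourhood weights by the complementation map $B\mapsto B\cup\{p\}$, the strictness coming from the single unmatched neighbour $P\setminus\{p\}$ of weight $p-1\geq 1$. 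This is a genuinely different verification: it is more structural and, in my view, more rigorous than the paper's floor-function count (which the reader must unwind to see what $M$ actually counts and why the inequality is strict), and it makes transparent that the choice of prime is immaterial; the cost is some set-theoretic bookkeeping. You also correctly isolate the degenerate case $m=1$ (empty graph), which the paper silently skips. I find no gap in your argument.
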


\begin{proof}

 Let $n = p_1.p_2...p_m, m \geq 1$ for distinct primes $p_1,p_2,...,p_m$. Then all the zero divisor elements of $\mathbb{Z}_n$ are $p_1, 2p_1,..., (p_2...p_m-1)p_1; p_2, 2p_2,...,(p_1p_3\\....p_m-1)p_2;..., p_m, 2p_m,...,(p_1p_2...p_{m-1}-1)p_m$. Let $\pi = {\lbrace R,B \rbrace}$ be a bipartition of vertices in $\Gamma(\mathbb{Z}_n)$. Suppose the set $R$ contains all the elements which are multiple of $p_m$ and the other set $B$ contains rest of elements. Then $B$ is an independent set because it does not contain any element which is multiple of $p_m$. Now, we take $u \in B$. Then there exists at least one element $v \in R$ such that $uv = 0$. Therefore, $\vert N(u) \cap B\vert = 0$ and $\vert N(u) \cap R\vert \geq 1$, for all $u \in B$. Hence, all the elements in set $B$ are very cost effective and thus the set $B$ is very cost effective set. \\
 Now, we divide $R$ into two sets $R_1$ and $R_2$. $R_1$ is the subset of $R$ containing multiple of $p_m$ as well as some of $p_i's$ (not all at a time), for $i = 1,...,m-1$ and $R_2$ is containing those multiples of $p_m$ which are not multiple of any  $p_i's$, $i = 1,...,m-1$. Then element $v \in R_2$ is not adjacent to any element in $R$. But, these elements are adjacent to the elements of set $B$, so $\vert N(v) \cap R\vert = 0$ and $\vert N(v) \cap B\vert \geq 1$. Again, if  $u \in R_1$, then number of vertices adjacent to $u$ in $R$ is $\lfloor \frac{p_1.p_2...p_{m-1}-1}{\Pi  p_i}\rfloor = M$, where $\Pi p_i $ is the product of those $p_i's$ $i = 1,...,m-1$ which are not available in $u$ and number of vertices which are adjacent to $u$ in $B$ is $((\Pi p_i)p_m -1-M)$ where $\Pi p_i $ is the product of those $p_i's$ $i = 1,...,m-1$ which are available in $u$. Since $\vert N(u) \cap R\vert = M$ and $\vert N(u) \cap B\vert > M$. So $\vert N(u) \cap R\vert < \vert N(u) \cap B\vert$. Therefore, the set $B$ is also very cost effective and the partition $\pi = {\lbrace R,B \rbrace}$ is very cost effective bipartition. Hence, the graph $\Gamma(\mathbb{Z}_n)$ is very cost effective graph.
\end{proof}

\begin{corollary}
$\Gamma(\mathbb{Z}_n)$ is very cost effective graph, for $n = pq$, where $p, q$ are distinct primes.

\end{corollary}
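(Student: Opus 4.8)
The plan is to obtain the corollary as the special case $m = 2$ of the preceding Theorem, taking (without loss of generality $p < q$) $p_1 = p$ and $p_2 = q$. First I would record the structure of $\Gamma(\mathbb{Z}_{pq})$ explicitly: since $\gcd(p,q) = 1$, the zero-divisors of $\mathbb{Z}_{pq}$ are exactly the nonzero multiples of $p$, namely $\{p, 2p, \ldots, (q-1)p\}$, together with the nonzero multiples of $q$, namely $\{q, 2q, \ldots, (p-1)q\}$. A short computation modulo $pq$ shows that two multiples of $p$, say $ap$ and $bp$ with $1 \le a, b \le q-1$, are never adjacent, because $ap \cdot bp = ab\,p^2 \equiv 0 \pmod{pq}$ would force $q \mid ab$, which is impossible for $a, b < q$; symmetrically no two multiples of $q$ are adjacent, whereas every multiple of $p$ is adjacent to every multiple of $q$. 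Hence $\Gamma(\mathbb{Z}_{pq})$ is the complete bipartite graph $K_{q-1,\,p-1}$.

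With this structure in hand I would take the bipartition $\pi = \{R, B\}$ of the Theorem, where $R$ is the set of multiples of $q$ (playing the role of $p_m$) and $B$ is the set of multiples of $p$. Because $m = 2$, the auxiliary decomposition $R = R_1 \cup R_2$ used in the proof of the Theorem degenerates: no multiple of $q$ is a multiple of $p$, so $R_1 = \emptyset$ and $R = R_2$, i.e. $R$ is itself an independent set. Thus both parts are independent, each part is nonempty (as $p - 1 \ge 1$ and $q - 1 \ge 1$ for primes $p, q$), and every vertex has all of its neighbors in the opposite part.

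It then remains only to verify the very cost effective inequalities, which are immediate. For $u \in B$ we have $\vert N(u) \cap B\vert = 0$ while $\vert N(u) \cap R\vert = p - 1 \ge 1$, so $u$ is very cost effective; symmetrically each $v \in R$ satisfies $\vert N(v) \cap R\vert = 0 < q - 1 = \vert N(v) \cap B\vert$. Hence both $R$ and $B$ are very cost effective sets and $\pi$ is a very cost effective bipartition. I do not expect any genuine obstacle here, since the statement is entirely subsumed by the Theorem; the only point requiring verification is the complete-bipartite description of $\Gamma(\mathbb{Z}_{pq})$, which one could alternatively bypass by invoking the observation from \cite{twh1} that every bipartite graph without isolated vertices is very cost effective.
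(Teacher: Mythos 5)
Your proposal is correct and follows essentially the same route as the paper: both identify $\Gamma(\mathbb{Z}_{pq})$ as the complete bipartite graph $K_{q-1,\,p-1}$ whose parts are the nonzero multiples of $p$ and of $q$, and conclude from there. The paper simply cites the observation that every bipartite graph without isolated vertices is very cost effective --- the alternative you mention at the end --- while your explicit verification of the inequalities is just that observation unwound in this special case.
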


\begin{proof}

For $n = pq$, there are two independent sets of vertices in $\Gamma(\mathbb{Z}_n)$. One set contains multiple of $p$ and other multiple of $q$ respectively. Therefore, the graph $\Gamma(\mathbb{Z}_n)$ is a complete bipartite graph. Since every bipartite graph without isolated vertices is very cost effective. Thus, $\Gamma(\mathbb{Z}_n)$ is very cost effective graph.

\end{proof}

\begin{theorem}
Let $p$ and $q$ be distinct primes and $n$ a positive integer.\\
    $(i)$ If $n = p^2 q$, $p, q \geq 2$, then $\Gamma(\mathbb{Z}_n)$ is very cost effective.\\
    $(ii)$ If $n = p^2 q^2$, $p, q \geq 3$ and $p < q$, then $\Gamma(\mathbb{Z}_n)$ is very cost effective.\\

\end{theorem}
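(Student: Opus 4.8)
The plan is to reuse, in a more refined form, the bipartition idea from the square‑free case: sort the zero‑divisors of $\mathbb{Z}_n$ according to $d(x)=\gcd(x,n)$, collect the ``heavily divisible'' classes on one side of the partition and the pendant classes on the other, and then verify the very cost effective inequality class by class. The two facts that make this mechanical are: (a) the number of $x$ with $\gcd(x,n)=d$ is $\phi(n/d)$, so every class size is an explicit Euler‑$\phi$ value; and (b) two zero‑divisors $x,y$ are adjacent iff $v_p(x)+v_p(y)\ge 2$ and (in case $(ii)$) $v_q(x)+v_q(y)\ge 2$, which reduces the entire adjacency structure to comparing $p$‑ and $q$‑adic valuations. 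Thus each class is either an independent ``pendant'' joined to a single neighbouring class, or a clique joined to several classes, and the whole verification becomes a finite list of size comparisons.

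For part $(i)$, $n=p^2q$, there are four classes, $d\in\{p,p^2,q,pq\}$, of sizes $(p-1)(q-1),\ q-1,\ p(p-1),\ p-1$; the class $pq$ is a clique also joined to $p$ and $p^2$, the class $p^2$ is joined to $q$, and $p,q$ are independent pendants. I would take $R$ to be the classes $p^2$ and $pq$ and $B$ the classes $p$ and $q$. Then $B$ is independent with every vertex having all its neighbours in $R$, so $B$ is very cost effective at once; for $R$ the class $p^2$ is immediate, and the only real computation is that a vertex of the clique $pq$ has $(p-2)+(q-1)$ neighbours inside $R$ against $(p-1)(q-1)$ in $B$, whence the required strict inequality is exactly $(p-2)(q-2)>0$. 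The main obstacle here is the degenerate cases $p=2$ and $q=2$, where this becomes an equality: when $p=2$ the class $pq$ is a single vertex and $\Gamma(\mathbb{Z}_{4q})$ is in fact bipartite, hence very cost effective by the bipartite criterion \cite{twh1}, while for $q=2$ I would break the tie by instead taking $R$ to be the classes $pq$ and $q$ and $B$ the classes $p$ and $p^2$.

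For part $(ii)$, $n=p^2q^2$ with $3\le p<q$, the same bookkeeping gives seven classes $d\in\{p,p^2,q,q^2,pq,p^2q,pq^2\}$; the classes $pq,p^2q,pq^2$ are cliques and $p,p^2,q,q^2$ are independent. I would place each pendant opposite its unique neighbouring clique ($p$ opposite $pq^2$, $q$ opposite $p^2q$) and place $p^2,q^2$ so that all their neighbours lie across the partition; the checks for these classes and for the two small cliques $p^2q,pq^2$ reduce to crude inequalities that hold because the sizes $\phi(p^2q)=q-1$ and $\phi(pq^2)=p-1$ are dominated by $\phi(q)$ and $\phi(p)$‑type counts on the opposite side. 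The genuine difficulty is the class $pq$: it is a clique of size $(p-1)(q-1)$ whose only external neighbours are the two much smaller cliques $p^2q$ and $pq^2$, so it cannot sit entirely on one side (a vertex would then see $(p-1)(q-1)-1$ internal neighbours against only $p+q-2$ external ones). The resolution is to bisect this clique, putting $x$ of its vertices in $R$ and $y=(p-1)(q-1)-x$ in $B$; the two very cost effective inequalities then collapse to the single integer condition $q-p-1<x-y<q-p+1$, i.e. $x-y=q-p$, which is solvable in nonnegative integers because $(p-1)(q-1)$ and $q-p$ have the same parity for odd primes. Carrying out this split and recording the remaining routine comparisons finishes the proof.
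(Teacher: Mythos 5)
Your proposal is correct, and for part $(ii)$ it is essentially the paper's own argument: the paper uses exactly your partition (the $p$-, $p^2$-, $p^2q$-classes against the $q$-, $q^2$-, $pq^2$-classes) and splits the $\gcd = pq$ clique into pieces of sizes $\frac{q(p-2)+1}{2}$ and $\frac{p(q-2)+1}{2}$, whose difference is precisely your $q-p$. For part $(i)$ you choose a different bipartition ($\{p^2,pq\}$ versus $\{p,q\}$), which is what forces the $(p-2)(q-2)>0$ tie and the separate treatment of $p=2$ and $q=2$; both of your fixes check out (the $p=2$ graph is indeed bipartite with no isolated vertices, and the swapped partition works for $q=2$). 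It is worth noting, though, that the fallback you invoke for $q=2$ --- multiples of $q$ on one side, multiples of $p$ but not $q$ on the other --- is exactly the single partition the paper uses for all $p,q\ge 2$: there the only nontrivial check is $p-2 < p(q-1)$ for a vertex of the $pq$-clique, which holds without any case analysis, so your extra casework is avoidable rather than necessary.
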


\begin{proof}

 $(i)$ Let $n = p^2 q$, where $p, q$ be distinct primes. Certainly, zero divisor elements of $\mathbb{Z}_n$ are either multiples of $p$ or multiples of $q$ or multiples of $pq$. Let $R$ be the set of vertices contains all those elements which are multiple of $q$ and $B$ contains all those elements which are multiple of $p$ but not $q$. Let $\pi = {\lbrace R,B \rbrace}$ be the bipartition of $V(\Gamma(\mathbb{Z}_n))$. Then $B$ is an independent set. Now, take $u \in B$, then $\vert N(u)\cap B \vert = 0$ and $\vert N(u)\cap R \vert \geq 1$. In the set $R$, take $v \in R$. If $v$ is multiple of $pq$, then $\vert N(v)\cap R \vert = p-2$ and $\vert N(v)\cap B \vert > p-2$. Again, if $v$ is a multiple of $q$ only, then $\vert N(v)\cap R \vert = 0$ and $\vert N(v)\cap B \vert \geq 1$. Hence, from both the condition $\vert N(v)\cap R \vert < \vert N(v)\cap B \vert $. Therefore, the sets $R$ and $B$ are very cost effective and thus, the graph $\Gamma(\mathbb{Z}_n)$ is very cost effective.\\

$(ii)$ Let $n = p^2 q^2$, where $p, q$ are distinct odd primes. Then the zero divisor elements are either multiple of $p$ or $q$ or both. Now, take a bipartition $\pi = {\lbrace R,B \rbrace}$ in such a way that set $R = R_1 \cup R_2 \cup R_3$ where $R_1 = \lbrace v \in V(\Gamma(\mathbb{Z}_{p^2 q^2})) : p^2\mid v\rbrace$, $R_2 = \lbrace v \in V(\Gamma(\mathbb{Z}_{p^2 q^2})) : p\mid v~ and~ q\notdivides v \rbrace$ and $R_3 = \lbrace v \in V(\Gamma(\mathbb{Z}_{p^2 q^2})) :pq\mid v ~and~ p^2 \notdivides v ~and~ q^2 \notdivides v \rbrace$. Here, $R_3$ contains $\frac{q(p-2)+1}{2}$ number of vertices. Now, set $B = B_1 \cup B_2 \cup B_3$ where $B_1 = \lbrace v \in V(\Gamma(\mathbb{Z}_{p^2 q^2})) : q^2\mid v \rbrace$, $B_2 = \lbrace v \in V(\Gamma(\mathbb{Z}_{p^2 q^2})) : q\mid v~ and~ p\notdivides v \rbrace$ and $ B_3 = \lbrace v \in V(\Gamma(\mathbb{Z}_{p^2 q^2})) :pq\mid v ~and ~p^2 \notdivides v ~and~ q^2 \notdivides v \rbrace$. Here, $B_3$ contains $\frac{p(q-2)+1}{2}$ number of vertices. Let $u\in R$. If $u$ is not a multiple of $q$, then $\vert   N(u)\cap R\vert = 0$ and $\vert   N(u)\cap B\vert \geq 1$. Again, if $u$ is a multiple of $p$ as well as $q$, then $\vert   N(u)\cap R\vert = \frac{pq-3}{2}$ and $\vert   N(u)\cap B\vert \geq \frac{pq-1}{2}$. So $\vert N(u) \cap R\vert < \vert N(u) \cap B\vert $ and the set $R$ is very cost effective.\\ Similarly, set $B$ is also very cost effective and the bipartition $\pi$ is very cost effective bipartition. Thus, the graph $\Gamma(\mathbb{Z}_n)$ is very cost effective.
\end{proof}

\begin{theorem}

$L(\Gamma(\mathbb{Z}_n))$ is very cost effective, Where $n = pq$, $p < q$ and $p, q$ are primes.\\

\end{theorem}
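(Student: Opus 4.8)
The plan is to first pin down the structure of $L(\Gamma(\mathbb{Z}_{pq}))$ by invoking the Corollary. For $n = pq$ the zero-divisors split into the $q-1$ nonzero multiples of $p$ and the $p-1$ nonzero multiples of $q$, and $\Gamma(\mathbb{Z}_{pq})$ is the complete bipartite graph $K_{q-1,\,p-1}$: every multiple of $p$ is joined to every multiple of $q$, and there are no other edges. Hence the vertices of $L(\Gamma(\mathbb{Z}_{pq}))$ are exactly the edges of $K_{q-1,p-1}$, and I would identify each such vertex with an ordered pair $(i,j)$, $1 \le i \le q-1$, $1 \le j \le p-1$, where $i$ indexes the multiple of $p$ and $j$ the multiple of $q$. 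Two vertices are adjacent in the line graph iff the corresponding edges share an endpoint, that is, iff $i = i'$ or $j = j'$ (but not both). In particular $L(\Gamma(\mathbb{Z}_{pq}))$ is the rook's graph on a $(q-1)\times(p-1)$ board and is regular of degree $(q-2)+(p-2) = p+q-4$.

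With this picture in hand, the natural candidate is a ``checkerboard'' bipartition: I would take $R = \{(i,j) : i+j \text{ even}\}$ and $B = V\setminus R = \{(i,j) : i+j \text{ odd}\}$. The key observation is that for a vertex $(i,j)$ the neighbours split into its row (fixed $i$, varying $j$) and its column (fixed $j$, varying $i$), and on each of these the same-side versus cross-side count is governed purely by the parity of the varying coordinate. So I would count, for a fixed vertex, how many of its $p-2$ row-neighbours and $q-2$ column-neighbours lie on the same side as $(i,j)$.

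When $p$ and $q$ are both odd (the case $p\ge 3$), both $p-1$ and $q-1$ are even, so each of $\{1,\dots,p-1\}$ and $\{1,\dots,q-1\}$ contains equally many indices of each parity. A short count then shows that every vertex has exactly $\big(\tfrac{p-1}{2}-1\big)+\big(\tfrac{q-1}{2}-1\big) = \tfrac{d}{2}-1$ same-side neighbours and $\tfrac{p-1}{2}+\tfrac{q-1}{2} = \tfrac{d}{2}+1$ cross-side neighbours, where $d = p+q-4$; since this holds for every vertex regardless of its side, both $R$ and $B$ are very cost effective. The remaining case $p = 2$ is degenerate: then $p-1 = 1$, the board has a single column, the column-neighbours disappear, and $L(\Gamma(\mathbb{Z}_{2q}))$ is simply the complete graph $K_{q-1}$. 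Here I would use the balanced partition into two halves of size $\tfrac{q-1}{2}$ (an integer since $q$ is odd), for which each vertex has $\tfrac{q-1}{2}-1$ same-side and $\tfrac{q-1}{2}$ cross-side neighbours. In both cases every vertex is very cost effective, which proves the theorem.

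The conceptual step I expect to matter most is the identification of $L(K_{q-1,p-1})$ with the rook's graph, since it converts the problem into a transparent parity argument; once that is in place the computation is routine. The only point requiring care is the parity bookkeeping together with the degenerate boundary $p=2$, where $d$ becomes odd and the uniform ``$\tfrac{d}{2}\pm 1$'' formulas must be replaced by the direct count given above.
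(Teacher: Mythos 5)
Your proposal is correct and follows essentially the same route as the paper: both identify $L(\Gamma(\mathbb{Z}_{pq}))$ with the line graph of $K_{q-1,\,p-1}$ (the rook's graph) and exhibit a bipartition balanced within every row and every column, arriving at the identical counts $\frac{p+q}{2}-3$ same-side versus $\frac{p+q}{2}-1$ cross-side neighbours; your checkerboard partition by the parity of $i+j$ is merely a different explicit choice from the paper's (odd-indexed rows paired with the first half of the columns, even-indexed rows with the second half). One genuine improvement on your side: you treat $p=2$ separately, where $L(\Gamma(\mathbb{Z}_{2q}))\cong K_{q-1}$, whereas the paper's construction uses $\frac{p-1}{2}$ and $\frac{p+1}{2}$ as integer bounds and silently breaks down in that case.
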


\begin{proof}

 If $n = pq$, then zero divisor graph $\Gamma(\mathbb{Z}_n)$ is a complete bipartite graph. So, there are two independent sets of vertices in which each vertex of a set is adjacent to every vertex of the other set. We draw the line graph of $\Gamma(\mathbb{Z}_n)$ with $( p-1)(q-1)$ vertices. This line graph is $(p+q-4)$ regular graph. Let $[u_i, v_j] \in V(L(\Gamma(\mathbb{Z}_n)))$, where $u_i's$ are multiple of $p$ $i.e$ $u_i = p.i$, $1 \leq i \leq q-1$ and $v_j's$ are multiple of q  $i.e$  $v_j = q.j$, $1 \leq j \leq p-1$. Now, Let $\pi = {\lbrace R,B \rbrace}$ be a bipartition of vertices in $L(\Gamma(\mathbb{Z}_n))$. Now we shall prove that $\pi$ is very cost effective bipartition. Since each set $R$ and $B$ contain $\frac{(p-1)(q-1)}{2}$ vertices. Therefore, set $R$ contains $\lbrace[u_1, v_k], [u_2, v_L], [u_3, v_k], [u_4, v_L],...,[u_{q-2}, v_k], [u_{q-1}, v_L]\rbrace$ where $1 \leq k \leq \frac{p-1}{2}$ and $\frac{p+1}{2} \leq L \leq p-1$ and $B$ contains $\lbrace[u_1, v_L], [u_2, v_k], [u_3, v_L], [u_4, v_k],...,$ $[u_{q-2}, v_L], [u_{q-1}, v_k]\rbrace$ where $1 \leq k \leq \frac{p-1}{2}$ and $\frac{p+1}{2} \leq L \leq p-1$. Now, take $[u_i, v_j] \in R$. Then $\vert N([u_i, v_j]) \cap R \vert = \frac{p+q}{2}-3$ and $\vert N([u_i, v_j]) \cap B \vert = \frac{p+q}{2}-1$. Therefore, $\vert N([u_i, v_j]) \cap R \vert < \vert N([u_i, v_j]) \cap B \vert$. Hence, $R$ is very cost effective set. Similarly, if we take $[u_i, v_j] \in B$, then $\vert N([u_i, v_j]) \cap B \vert < \vert N([u_i, v_j]) \cap R \vert$. Thus, the line graph $L(\Gamma(\mathbb{Z}_n))$ is very cost effective graph.

\end{proof}

\section{Bipartition in $N(\mathbb{Z}_n)$ and $\Omega(\mathbb{Z}_n)$}

In 2008, Bishop et al. \cite{bishop} introduced the concept of Nilradical graph and Non-Nilradical graph and further some work appeared in \cite{prakash}. They defined these graphs as follows:

\begin{definition}\cite{bishop}  The nilradical graph, denoted $N(R)$, is the graph whose vertices are the nonzero nilpotent elements of $R$ and two vertices are connected by an edge if and only if their product is $0$.
\end{definition}

\begin{definition}\cite{bishop}  The non-nilradical graph, denoted $\Omega(R)$, is the graph whose vertices are the non-nilpotent zero-divisors of $R$ and where two vertices are connected by an edge if and only if their product is $0$.
\end{definition}
  Fig 1 is serve an example of $N(\mathbb{Z}_{16})$. Also the example of $\Omega(\mathbb{Z}_18)$ is given below:

 \[\begin{tikzpicture}
	\vertex (4) at (-0.2,1) [label=left:$4$]{};
	\vertex (2) at (0.75,2.75) [label=left:$2$]{};
	\vertex (8) at (0.75,-0.75) [label=below:$8$]{};
	\vertex (9) at (2,1) [label=above:$9$]{};
	\vertex (16) at (3.25,2.75) [label=above:$16$]{};
	\vertex (10) at (3.25,-0.75) [label=right:$10$]{};
	\vertex (14) at (4.2,1) [label=right:$14$]{};
	\vertex (3)  at (0.25,1.75) [label=left:$3$]{};
	\vertex (15)  at (3.85,1.75) [label=right:$15$]{};
	\path
		(2) edge (9)
		(4) edge (9)
		(8) edge (9)
		(9) edge (10)
		(9) edge (14)
		(9) edge (16)

	;
\end{tikzpicture}\]
\hspace{4.5cm} \textbf{Figure 3}  $\Omega(\mathbb{Z}_{18})$

\begin{theorem}
Let $p, q$ be distinct primes and $n$, a positive integer. \\
   $(i)$ If $n = p^2$ and $p > 2$, then $N(\mathbb{Z}_n)$ is very cost effective graph.\\
   $(ii)$ If $n = p^2 q^2$ and $p, q \geq 2$, then $N(\mathbb{Z}_n)$ is very cost effective graph.\\
   $(iii)$ If $n = p^3$ and $p \geq 2$, then $N(\mathbb{Z}_n)$ is very cost effective graph.\\
   $(iv)$ If $n = p^2 q$ and $p > 2$, then $N(\mathbb{Z}_n)$ is very cost effective graph.\\
\end{theorem}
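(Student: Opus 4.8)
The plan is to pin down, in each of the four cases, the exact structure of $N(\mathbb{Z}_n)$ and then reduce the existence of a very cost effective bipartition to a short combinatorial check. The starting point is the standard fact that the nilpotent elements of $\mathbb{Z}_n$ are precisely the nonzero multiples of $\mathrm{rad}(n)$, the product of the distinct primes dividing $n$. So first I would list the vertices: for $(i)$ $n=p^2$ and $(iii)$ $n=p^3$ they are the nonzero multiples of $p$, while for $(ii)$ $n=p^2q^2$ and $(iv)$ $n=p^2q$ they are the nonzero multiples of $pq$.

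Next I would compute adjacency. In cases $(i)$, $(ii)$, $(iv)$ the product of any two such multiples is already $0$ in $\mathbb{Z}_n$; for instance in $(iv)$ one has $(ipq)(jpq)=ijq\cdot(p^2q)\equiv 0$. Hence in these three cases $N(\mathbb{Z}_n)$ is a complete graph: $K_{p-1}$ in $(i)$ and $(iv)$, and $K_{pq-1}$ in $(ii)$. The whole problem then collapses to one lemma: $K_m$ admits a very cost effective bipartition if and only if $m$ is even, the witness being the balanced split $|S|=|V\setminus S|=m/2$, under which each vertex sees $m/2-1$ neighbours inside and $m/2$ outside. I would prove the ``only if'' direction by an interval argument: the two part conditions force the size $a$ of one part to satisfy $(m-1)/2<a<(m+1)/2$, an open interval of length one that contains an integer exactly when $m$ is even. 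Applying this, $p>2$ makes $p-1$ even, which settles $(i)$ and $(iv)$; for $(ii)$ the order $pq-1$ is even precisely when $pq$ is odd.

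The main obstacle is exactly this parity constraint, and it is where I expect the statement to need care. For $(ii)$, $K_{pq-1}$ is very cost effective if and only if both $p$ and $q$ are odd, so the argument genuinely requires $p,q>2$ rather than merely $p,q\geq 2$ (e.g.\ $n=36$ gives $K_5$, which has no very cost effective bipartition). I would therefore carry out the proof of $(ii)$ under the hypothesis that $p$ and $q$ are odd.

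Case $(iii)$ is genuinely different, since $N(\mathbb{Z}_{p^3})$ is not complete. Here I would split the nonzero multiples of $p$ into the set $A$ of multiples of $p^2$ (there are $p-1$ of them) and the set $C$ of multiples of $p$ that are not multiples of $p^2$ (there are $p^2-p$ of them). A quick computation gives $(ip^2)(kp)=ikp^3\equiv 0$, while $(jp)(kp)=jkp^2\not\equiv 0$ whenever $p$ does not divide $jk$, so $A$ is a clique, $C$ is independent, and every vertex of $A$ is joined to every vertex of $C$; the graph is a complete split graph. I would then take the bipartition $R=A$, $B=C$. Each vertex of $A$ has $p-2$ neighbours in $R$ and $p^2-p$ in $B$, with $p-2<p^2-p$ for all $p\geq 2$; each vertex of $C$ has no neighbour in $B$ and $p-1$ neighbours in $R$, with $0<p-1$. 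Hence both parts are very cost effective and $(iii)$ holds for every $p\geq 2$. Once the parity issue in the complete-graph cases is isolated, every remaining verification is a one-line degree count.
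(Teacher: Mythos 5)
Your argument follows essentially the same route as the paper: cases $(i)$, $(ii)$, $(iv)$ reduce to the complete graphs $K_{p-1}$ or $K_{pq-1}$ together with the parity criterion for $K_m$ (which the paper merely asserts and you actually prove via the interval argument), and case $(iii)$ uses the same complete split structure with the same bipartition $R=A$, $B=C$ and the same degree counts. Your remark about $(ii)$ is correct and worth emphasizing: as stated, with $p,q\geq 2$, the claim is false, since $n=36$ yields $N(\mathbb{Z}_{36})=K_5$, which admits no very cost effective bipartition; the paper's own proof silently assumes $p,q>2$ (it says ``$p,q>2$, so $pq-1$ is even'') even though the hypothesis permits $p=2$, so your restriction to odd $p$ and $q$ is the correct fix.
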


\begin{proof}

 $(i)$ Let $n = p^2$, where $p$ is a prime number and $p > 2$. Then the nilpotent elements in $\mathbb{Z}_n$ are $p, 2p,...,(p-1)p$. So, the number of nilpotent elements is $p-1$ and every element is adjacent to the other element. So, these $(p-1)$ elements forms a complete graph. Since $p$ is prime, then $p-1$ is even and every complete graph of even order is very cost effective. Thus, $N(\mathbb{Z}_n)$ is very cost effective graph.\\

   $(ii)$ Let $n = p^2 q^2$, where $p, q$ are distinct primes. Then the nilpotent elements in $\mathbb{Z}_n$ are multiple of $pq$ and number of nilpotent elements are $pq-1$. Since all the nilpotent elements are multiple of $pq$, so every vertex is adjacent to the all other vertices in $N(\mathbb{Z}_n)$. Therefore, $(pq-1)$ elements form a complete graph with $pq-1$ vertices. Since $p$ and $q$ are prime number and $p, q > 2$, so $pq-1$ is even number. Hence, the graph of $N(\mathbb{Z}_n)$ is very cost effective. \\

   $(iii)$ If $n = p^3$, where $p$ is prime number, then all the nilpotent elements are multiple of $p$ and total number of nilpotent elements are $p^2-1$. Now, we take bipartition $\pi = {\lbrace R,B \rbrace}$ of vertex set in $N(\mathbb{Z}_n)$ such that $R$ contains only those elements which are divisible by $p$ but not by $p^2$ and the set $B$ contains those elements which are divisible by only $p^2$. Then the set $R$ has $p(p-1)$ elements and the set $B$ has $p-1$ elements. Now, set $R$ is independent set and in $B$, each vertex is adjacent to every other vertices in $B$. Since all the elements of $R$ is adjacent to all elements of set $B$. Therefore, $R$ is very cost effective set. Now, take $u \in B$, then $\vert N(u) \cap B\vert = p-2$ and $\vert N(u) \cap R\vert = p(p-1)$. Since $\vert N(u) \cap B\vert < \vert N(u) \cap R\vert$, so set $B$ is very cost effective. Hence, $\pi = {\lbrace R,B \rbrace}$ is very cost effective bipartition and graph $N(\mathbb{Z}_n)$ is very cost effective. \\

   $(iv)$ Let $n = p^2 q$, where $p$ and $q$ are distinct prime number and $p \neq q$. Then the nilpotent elements of $N(\mathbb{Z}_n)$ are multiples of $pq$ and the number of nilpotent elements are $p-1$. These $p-1$ elements are connected to each other so these $p-1$ vertices forms a complete graph. Since $p$ is an odd prime so $p-1$ is even and complete graph of even number is very cost effective. Hence, $N(\mathbb{Z}_n)$ is very cost effective graph.

\end{proof}

\begin{theorem}
If $p$ and $q$ are distinct prime number and $n$ is a positive integer, then  $\Omega(\mathbb{Z}_n)$ is not very cost effective graph, where $n = p^2 q$.\\

\end{theorem}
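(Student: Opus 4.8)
The plan is to exhibit at least one isolated vertex in $\Omega(\mathbb{Z}_{p^2q})$, since an isolated vertex can never be very cost effective: if $v$ is isolated and $\pi = \{S, V \setminus S\}$ is any bipartition with $v \in S$, then $|N(v) \cap S| = 0 = |N(v) \cap (V \setminus S)|$, so the strict inequality $|N(v) \cap S| < |N(v) \cap (V \setminus S)|$ fails and $S$ is not very cost effective. Hence producing a single isolated vertex is enough to rule out every very cost effective bipartition.

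First I would describe the vertex set. For $n = p^2 q$, an element is a zero-divisor iff it is divisible by $p$ or by $q$, and it is nilpotent iff it is divisible by the radical $pq$. Thus the vertices of $\Omega(\mathbb{Z}_n)$ are exactly the elements divisible by $p$ but not $q$, together with those divisible by $q$ but not $p$.

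The key step is to locate the isolated vertices among the multiples of $p$. Consider an element of the form $x = p\,a$ with $\gcd(a, pq) = 1$; equivalently, $x$ is divisible by $p$ exactly once and not by $q$, so $x$ is a vertex. For any vertex $y$, adjacency means $p^2 q \mid x y = p\,a\,y$, i.e. $pq \mid a y$; since $\gcd(a, pq) = 1$ this forces $pq \mid y$, making $y$ nilpotent and hence not a vertex of $\Omega(\mathbb{Z}_n)$. Therefore $x$ has no neighbour and is isolated. The number of such $x$ equals the number of $a$ in $\{1, \ldots, pq - 1\}$ coprime to $pq$, which is $\phi(pq) = (p-1)(q-1) \geq 1$ because $p, q \geq 2$ are distinct primes. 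So at least one isolated vertex always exists; indeed the graph $\Omega(\mathbb{Z}_{18})$ in Figure 3, whose isolated vertices are $3$ and $15$, is precisely this phenomenon with $p = 3$ and $q = 2$.

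Combining these, every bipartition of $\Omega(\mathbb{Z}_{p^2 q})$ places such an isolated vertex in one of the two parts, and that part fails to be very cost effective; hence $\Omega(\mathbb{Z}_{p^2 q})$ is not very cost effective. The only delicate point is the structural identification of the isolated vertices together with the divisibility implication $pq \mid a y \Rightarrow pq \mid y$; once that is in hand the conclusion is immediate, so I expect no serious obstacle beyond bookkeeping the divisibility conditions correctly.
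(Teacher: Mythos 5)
Your proof is correct and follows essentially the same route as the paper: both arguments exhibit an isolated vertex of $\Omega(\mathbb{Z}_{p^2q})$ (the paper uses the vertex $p$ itself, which is your case $a=1$) and conclude that no bipartition can be very cost effective. Your write-up is actually more complete, since it justifies the divisibility step $p^2q \mid pay \Rightarrow pq \mid y$ that the paper leaves implicit and correctly matches the isolated vertices $3$ and $15$ in Figure 3.
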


\begin{proof}

 Let $n = p^2 q$, where $p$ and $q$ be a distinct primes. Then the non-nilradical elements are all zero-divisors which are not divisible by $pq$. Since these elements are not adjacent to themselves but the vertices which are multiple of $p^2$ is adjacent to multiple of $q$. Here, $p$ is also another vertex which is not adjacent to any other vertices. Therefore, $p$ is an isolated vertices. Hence $\Omega(\mathbb{Z}_n)$ is not very cost effective graph.

\end{proof}

\begin{theorem}

Let $p_1,p_2,...,p_m$ be distinct prime and $n = p_1.p_2...p_m, m \geq 1$. Then $\Omega(\mathbb{Z}_n)$ is very cost effective graph.\\

\end{theorem}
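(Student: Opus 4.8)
The plan is to recognise that for squarefree $n$ the non-nilradical graph $\Omega(\mathbb{Z}_n)$ is \emph{identical} to the zero-divisor graph $\Gamma(\mathbb{Z}_n)$, and then to invoke the first theorem of this section. First I would pin down the nilpotent elements of $\mathbb{Z}_n$. Writing $n = p_1 p_2 \cdots p_m$ with the $p_i$ distinct, an element $a \in \mathbb{Z}_n$ is nilpotent precisely when $n \mid a^k$ for some $k \geq 1$; since each $p_i$ is prime and occurs to the first power in $n$, this forces $p_i \mid a$ for every $i$, hence $n \mid a$, so $a = 0$. Thus $0$ is the only nilpotent element of $\mathbb{Z}_n$ (equivalently $\mathbb{Z}_n \cong \mathbb{Z}_{p_1} \times \cdots \times \mathbb{Z}_{p_m}$ is a reduced ring, being a product of fields).

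Consequently every nonzero zero-divisor is non-nilpotent, so the set of non-nilpotent zero-divisors defining the vertices of $\Omega(\mathbb{Z}_n)$ coincides with the vertex set $Z(\mathbb{Z}_n)$ of $\Gamma(\mathbb{Z}_n)$. The adjacency rule is the same in both graphs, namely that two vertices are joined iff their product is $0$, so the two graphs are equal: $\Omega(\mathbb{Z}_n) = \Gamma(\mathbb{Z}_n)$. At this point I would simply apply the first theorem of Section~2, which asserts that $\Gamma(\mathbb{Z}_n)$ is very cost effective for exactly this family of $n$, to conclude that $\Omega(\mathbb{Z}_n)$ is very cost effective as well.

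There is no genuine computational obstacle here; the entire content is the structural observation that squarefreeness makes $\mathbb{Z}_n$ reduced, collapsing $\Omega$ onto $\Gamma$. The only point requiring a little care is the verification that $0$ is the sole nilpotent, so that no nontrivial nilpotent vertex is removed, but this is immediate from the factorisation being squarefree. Once that identification is in place, the very cost effective bipartition $\pi = \{R, B\}$ constructed in the proof of the first theorem (with $R$ the multiples of the largest prime $p_m$, split into its two parts, and $B$ the remaining vertices) transfers verbatim, and no new bipartition needs to be built.
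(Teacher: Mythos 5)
Your proposal is correct and follows exactly the paper's route: the paper's proof is the one-line observation that $\Omega(\mathbb{Z}_n) = \Gamma(\mathbb{Z}_n)$ for squarefree $n$, followed by an appeal to the first theorem of Section~2. You simply supply the (easy but worthwhile) justification that squarefreeness forces $\mathbb{Z}_n$ to be reduced, which the paper leaves implicit.
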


\begin{proof}
 Here, $\Omega(\mathbb{Z}_n) = \Gamma(\mathbb{Z}_n)$. So $\Omega(\mathbb{Z}_n)$ is very cost effective graph.

\end{proof}

\section{Bipartition in $T(\Gamma(\mathbb{Z}_n))$}

In this section, we have studied the very cost effective properties of $T(\Gamma(\mathbb{Z}_n))$ for $n = 2p$ and $n = pq$.

\begin{theorem}

Let $n = 2p$, p be an odd prime. Then the total graph $T(\Gamma(\mathbb{Z}_n))$ is not very cost effective.

\end{theorem}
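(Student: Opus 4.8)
The plan is to first pin down the structure of $\Gamma(\mathbb{Z}_{2p})$ and then of its total graph. For $n = 2p$ with $p$ an odd prime, the nonzero zero-divisors are the $p-1$ even residues $2, 4, \dots, 2(p-1)$ together with the single residue $p$. Two even residues $2i, 2j$ multiply to $4ij \not\equiv 0 \pmod{2p}$, since $p \notdivides ij$, whereas $p\cdot 2k \equiv 0 \pmod{2p}$. Hence $\Gamma(\mathbb{Z}_{2p})$ is the star $K_{1,p-1}$ with center $c = p$ and the even residues as leaves. First I would record this and then describe $T(\Gamma(\mathbb{Z}_{2p}))$ explicitly: its vertices are $c$, the $p-1$ leaves $\ell_1, \dots, \ell_{p-1}$, and the $p-1$ edge-vertices $e_1, \dots, e_{p-1}$ (one for each edge $c\ell_i$). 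The adjacencies are then: $c$ is joined to every $\ell_i$ and every $e_i$, so $\deg c = 2(p-1)$; each leaf $\ell_i$ is joined only to $c$ and to $e_i$, so $\deg \ell_i = 2$; and the edge-vertices form a clique, with $e_i$ additionally joined to $c$ and $\ell_i$.

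The key observation is a degree-two rigidity. A vertex of degree $2$ can be very cost effective only if both of its neighbors lie on the opposite side of the bipartition, since placing one neighbor on each side yields equality rather than the required strict inequality. Thus in any very cost effective bipartition $\pi = \{S, V\setminus S\}$, each leaf $\ell_i$ must be separated from both of its neighbors $c$ and $e_i$, which forces $c$ and $e_i$ onto the common side opposite $\ell_i$. Since $c$ is a single vertex lying on one fixed side, this pins every $\ell_i$ to the other side and pushes every $e_i$ back onto the side of $c$. Consequently the bipartition is forced, up to interchanging the two parts, to be $\{c, e_1, \dots, e_{p-1}\}$ versus $\{\ell_1, \dots, \ell_{p-1}\}$.

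Finally I would test the center against this forced partition. The neighbors of $c$ split as the $p-1$ edge-vertices $e_i$ on its own side and the $p-1$ leaves $\ell_i$ on the other, giving $|N(c)\cap S| = p-1 = |N(c)\cap (V\setminus S)|$. This is equality, not the strict inequality demanded of a very cost effective vertex, so $c$ fails and the part containing it is not a very cost effective set. Because the leaf constraints admit no other bipartition, $T(\Gamma(\mathbb{Z}_{2p}))$ has no very cost effective bipartition. The only genuine bookkeeping is verifying the total-graph adjacencies; the conceptual crux, which I expect to be the main point rather than any computation, is recognizing that the $p-1$ degree-two leaves rigidly determine the bipartition and then collide with the perfectly balanced neighborhood of the center.
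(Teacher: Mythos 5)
Your proof is correct and follows essentially the same route as the paper: both arguments use the degree-two leaves to force the bipartition (up to swapping sides) into $\{c, e_1,\dots,e_{p-1}\}$ versus $\{\ell_1,\dots,\ell_{p-1}\}$, and then exhibit a vertex violating the strict inequality. The only immaterial difference is the final witness: you test the center $c$, whose neighborhood splits evenly $p-1$ against $p-1$, while the paper tests an edge-vertex, which has $p-1$ neighbors on its own side and only one on the other.
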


\begin{proof}

 If $n = 2p$, then $\Gamma(\mathbb{Z}_n)$ is a star graph with $p$ vertices. Now, total graph of $\Gamma(\mathbb{Z}_n)$ is a graph with $2p-1$ vertices in which $p-1$ vertices have degree two, $p-1$ vertices have degree $p$ and one vertex which is itself $p$ has degree $2p-2$ respectively. In order to prove $T(\Gamma(\mathbb{Z}_n))$ is not very cost effective, let $T(\Gamma(\mathbb{Z}_n))$ be a very cost effective graph. Then it has a very cost effective bipartition $\pi = {\lbrace R,B \rbrace}$. Take $u \in R$ and suppose $u$ is a vertex whose degree is $2$. Then $\vert N(u) \cap R\vert = 0$ and $\vert N(u) \cap B\vert = 2$. Now, the vertex $p \in B$ because $u$ is adjacent to the vertex $p$. So, all the vertices whose degree are $2$ belong to the set $R$ because these vertices are adjacent to vertex $p$. Also, rest of the $p-1$ vertices which are of the form $[2, p], [2.2, p],...,[2(p-1), p]$ will be in $B$. If any one of these vertices belongs to $R$, then $R$ is not very cost effective set. So, take $[u_i, p] \in B$, where $u_i=2.i$, $1 \leq i \leq p-1$. Then $\vert N([u_i, p]) \cap B \vert = p-1$ and $\vert N([u_i, p]) \cap R \vert = 1$. Hence, the set $B$ is not very cost effective set, which contradicts our assumption. Thus, the graph $T(\Gamma(\mathbb{Z}_n))$ is not very cost effective.

\end{proof}

\begin{theorem}

Let $n = pq$, $p, q \geq 3$ and $p < q$ are primes. Then total graph $T(\Gamma(\mathbb{Z}_n))$ is very cost effective.
\end{theorem}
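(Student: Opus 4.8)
The plan is to construct the bipartition of $T(\Gamma(\mathbb{Z}_n))$ by grafting the original (ring-element) vertices onto the very cost effective bipartition of the line graph produced in the earlier theorem. By the corollary, $G=\Gamma(\mathbb{Z}_{pq})$ is the complete bipartite graph with parts $X=\{p,2p,\dots,(q-1)p\}$ of size $q-1$ and $Y=\{q,2q,\dots,(p-1)q\}$ of size $p-1$. Hence $T(G)$ has three kinds of vertices: the $q-1$ originals of $X$, the $p-1$ originals of $Y$, and the $(p-1)(q-1)$ edge-vertices $[u_i,v_j]$. First I would record their degrees in $T(G)$: each $u_i\in X$ has degree $2(p-1)$ (the $p-1$ originals of $Y$ together with its $p-1$ incident edge-vertices), each $v_j\in Y$ has degree $2(q-1)$, and each edge-vertex $[u_i,v_j]$ has degree $p+q-2$ (its two endpoints plus the $(p-2)+(q-2)$ edges sharing an endpoint). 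Since $p,q$ are odd, all these degrees are even, so being very cost effective at a vertex means having strictly fewer than half its neighbours in its own part.

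Next I would define $\pi=\{R,B\}$ as follows: put every original of $X$ into $R$, every original of $Y$ into $B$, and split the edge-vertices exactly by the line graph bipartition of the earlier theorem. The verification then splits into three checks. For an edge-vertex $[u_i,v_j]$, the line graph theorem already gives that among its line-graph neighbours it has $\frac{p+q}{2}-3$ in its own part and $\frac{p+q}{2}-1$ in the other, a margin of $2$; since $u_i\in X\subseteq R$ and $v_j\in Y\subseteq B$, the two new neighbours $u_i,v_j$ contribute exactly one vertex to each part, so the margin is unchanged and $[u_i,v_j]$ remains very cost effective. This is the reason for sending $X$ and $Y$ to opposite parts.

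It remains to check the originals, and here the main point is a counting fact about the line graph bipartition. I would show that for each fixed $u_i$ its $p-1$ incident edge-vertices split evenly, $\frac{p-1}{2}$ into $R$ and $\frac{p-1}{2}$ into $B$ (because the partition sends the ``low half'' and the ``high half'' of the $v_j$'s in opposite directions), and symmetrically that for each fixed $v_j$ its $q-1$ incident edge-vertices split as $\frac{q-1}{2}$ and $\frac{q-1}{2}$ (because the partition alternates with the parity of $i$). Granting this, $u_i\in R$ sees all $p-1$ originals of $Y$ in $B$ plus $\frac{p-1}{2}$ incident edge-vertices in each of $R,B$, so $|N(u_i)\cap R|=\frac{p-1}{2}<\frac{3(p-1)}{2}=|N(u_i)\cap B|$; and $v_j\in B$ sees all $q-1$ originals of $X$ in $R$ plus an even split of incident edges, giving $|N(v_j)\cap B|=\frac{q-1}{2}<\frac{3(q-1)}{2}=|N(v_j)\cap R|$. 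Both originals are therefore very cost effective, and $\pi$ is a very cost effective bipartition.

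The main obstacle I anticipate is precisely this interaction step: one must check simultaneously that attaching the two endpoint-neighbours to each edge-vertex does not destroy the slim, size-$2$ margin it had in the line graph, and that each original acquires a strict majority across the partition. Routing all of $X$ into one part and all of $Y$ into the other is what makes both happen at once, and the even-split observation for incident edge-vertices is the one genuinely new computation required; everything else is bookkeeping with the degrees recorded above.
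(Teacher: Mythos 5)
Your proposal is correct and follows essentially the same route as the paper: the same bipartition (all originals of $X$ in one part, all originals of $Y$ in the other, and the edge-vertices split exactly as in the line-graph theorem) and the same three-case verification. In fact your counts for the original vertices, $\frac{p-1}{2}$ versus $\frac{3(p-1)}{2}$, are the accurate ones; the paper states $p-2$ and $p$ (resp.\ $q-2$ and $q$), which agree with the true values only when $p=3$, though the required strict inequality holds either way.
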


\begin{proof}

If $n= pq$, then $\Gamma({Z}_n)$ is a bipartite graph with $p+q-2$ vertices and $(p-1)(q-1)$ edges. Now, in total graph $T(\Gamma(\mathbb{Z}_n))$, there is $pq-1$ vertices in which $q-1$ vertices are multiples of $p$ and each has degree $2(p-1)$. Also $p-1$ vertices which are multiples of $q$ have degree $2(q-1)$. The vertices $[u_i, v_j] \in V(T(\Gamma(\mathbb{Z}_n))$,  $u_i = p.i$, $1\leq i\leq q-1$ and $v_j = q.j$, $1 \leq j \leq p-1$ have degree $p+q-2$. Suppose set $R$ contains the element $\lbrace p,2p,...,(q-1)p,[u_1, v_k],[u_2, v_L],[u_3, v_k],...,[u_{q-1}, v_L] \rbrace$ and $B$ contains $\lbrace q,2q,...,(p-1)q,[u_1, v_L],[u_2, v_k],[u_3, v_L],...,[u_{q-1}, v_k] \rbrace$ where $1 \leq k \leq \frac{p-1}{2}$,  $\frac{p+1}{2} \leq L \leq p-1$. Take bipartition $\pi = {\lbrace R,B \rbrace}$. We have to show that this bipartition is very cost effective.

   Now, consider $u_i \in R$, then $\vert N(u_i) \cap R \vert = p-2$ and $\vert N(u_i) \cap B \vert = p$. Also, for $[u_i, v_j] \in R$, we have $\vert N([u_i, v_j]) \cap R \vert = \frac{p+q-4}{2}$ and $\vert N([u_i, v_j]) \cap B \vert = \frac{p+q}{2}$. So, for every vertex $v \in R$, $\vert N(v) \cap R \vert < \vert N(v) \cap B \vert$ and the set $R$ is very cost effective set. In the set $B$, take $v_j \in B$, then $\vert N(v_j) \cap B \vert = q-2$ and $\vert N(v_j) \cap R \vert = q$. Again, take $[u_i, v_j] \in B$, we have $\vert N([u_i, v_j]) \cap B \vert = \frac{p+q-4}{2}$ and $\vert N([u_i, v_j]) \cap R \vert = \frac{p+q}{2}$. Therefore, $\vert N(v) \cap B \vert < \vert N(v) \cap R \vert$ for every vertex $v$ in $B$. So set $B$ is also very cost effective. Hence, the bipartition $\pi$ is very cost effective bipartition and the total graph $T(\Gamma(\mathbb{Z}_n))$ is very cost effective graph.

\end{proof}

\bibliographystyle{amsplain}

\end{document}